\newtheorem{theorem}{Theorem}[section]
\newtheorem{lemma}[theorem]{Lemma}
\newtheorem{corollary}[theorem]{Corollary}
\theoremstyle{definition}
\newcommand{\I}{\mathcal I}
\newcommand{\A}{\mathcal A}
\newcommand{\Pe}{\mathcal P}
\newcommand{\N}{\mathbb N}
\newcommand{\M}{\mathfrak M}
\newcommand{\on}{\operatorname}
\author{Marek Balcerzak}
\address{Institute of Mathematics, \L \'od\'z University of Technology,
W\'olcza\'nska 215, 93-005 \L \'od\'z, Poland}
\email {marek.balcerzak@p.lodz.pl}
\author{Micha{\l} Pop\l awski}
\address{Institute of Mathematics, \L \'od\'z University of Technology,
W\'olcza\'nska 215, 93-005 \L \'od\'z, Poland}
\email {michal.poplawski.m@gmail.com}
\author{Artur Wachowicz}
\address{Institute of Mathematics, \L \'od\'z University of Technology,
W\'olcza\'nska 215, 93-005 \L \'od\'z, Poland}
\email {artur.wachowicz@p.lodz.pl}
\title[Ideal convergent subsequences]{Ideal convergent subsequences and rearrangements for divergent sequences of functions}
\date{}
\subjclass[2010]{40A35, 40A05, 54E52, 28A05} 
\keywords{Ideal convergence, Baire category, $\sigma$-finite measure, divergence almost eveywhere, subsequences, rearrangement}
\date{}
\begin{document}
\begin{abstract}
Let $\I$ be an ideal on $\N$ which is either analytic or coanalytic.
Assume that $(f_n)$ is a sequence of functions with the Baire property from a Polish space $X$ into a complete metric space
$Z$, which is divergent on a comeager set. 
We investigate the Baire category of $\I$-convergent subsequences and rearrangements of $(f_n)$.  Our result generalizes a theorem of Kallman.
A similar theorem for subsequences is obtained if $(X,\mu)$ is a $\sigma$-finite complete measure space
and a sequence $(f_n)$ of measurable functions from $X$ to $Z$ is $\I$-divergent $\mu$-almost everywhere.
Then the set of subsequences of $(f_n)$, $\I$-divergent $\mu$-almost everywhere, is of full product measure on $\{ 0,1\}^\N$. Here we  assume additionally that $\mathcal I$ has property (G).
\end{abstract}

\maketitle

\section{Introduction}

Denote $\N:=\{ 1,2,\dots\}$.
Let 
$$
S:=\left\{ s\in\N^\N\colon \forall n\in\N\;\; s(n)<s(n+1)\right\}\quad\mbox{and}\quad P:=\left\{ p\in\N^\N\colon p\mbox{ is a bijection}\right\} .
$$
 These are  $G_\delta$ subsets of $\N^\N$,
hence they are Polish spaces. Given a sequence $(x_n)$ in a space $X$, and a property (Prop) of sequences, one can study the Baire category of subsequences and rearrangements of $(x_n)$ with the property (Prop). This can be done by
checking the Baire category of the sets
\begin{equation} \label{EQ1} 
\left\{ s\in S\colon (x_{s(n)})\mbox{ has (Prop)}\right\}\quad \textrm{and}\quad\left\{ p\in P\colon (x_{p(n)})\mbox{ has (Prop)}\right\} 
\end{equation}
in the spaces $S$ and $P$. 
In \cite{BkGW}, these sets were investigated under the assumption that $(x_n)$ is a divergent sequence of reals and (Prop) means $\I$-convergence where $\I$ is an ideal of subsets of $\N$. The paper \cite{BkGW} was motivated by the article by Miller and Orhan \cite{MO} dealing with statistical convergence, a special case of $\I$-convergence. In Section 2, we continue similar studies in an extended form.
Namely, we move to problems concerning sequences of functions with the Baire property, from a Polish space to a complete metric space.
We will assume that a given sequence $(f_n)$ of functions is divergent on a comeager set, and then we will examine the Baire category
of the sets of $\I$-convergent subsequences and rearrangements of $(f_n)$. This approach follow the ideas of Kallman \cite{K} where the Baire category of convergent
(in the usual sense) subsequences, under the same assumption on $(f_n)$, was studied.

The first of the sets in (\ref{EQ1}) can be also investigated from the measure point of view. Namely,
one can introduce a probability measure on $S$ as follows. Let $T\subset \{0,1\}^\N$ consist of
0-1 sequences with infinitely many terms equal to 1. There is a natural homeomorphism $h$ from $S$ onto $T$ which assigns to any $s\in S$, a sequence $t\in T$ such that $t(s(i)):=1$ for all $i\in\N$, and $t(j):=0$ for all $j\notin\{s(i)\colon i\in\N\}$. Consider the uniform probability measure on $\{0,1\}$ and the respective product measure $\nu$ 
on the completion $\mathcal D$ of the respective product $\sigma$-algebra on $\{0,1\}^\N$. Since $T$ is a cocountable subset of $\{0,1\}^\N$, we have $\nu(T)=1$.
We can transfer this measure to $S$ via the bijection $h^{-1}$. More precisely, put $\lambda(E):=\nu(h[E])$
for all $E$ in the $\sigma$-algebra $\mathcal A:=\{ h^{-1}[D]\colon D\in\mathcal D\}$ of subsets of $S$.
In Section 3, for special ideals $\I$ on $\N$, we will state that
$$\lambda(\left\{ s\in S\colon (f_{s(n)})\mbox{ is }\I\mbox{-divergent }\mu\mbox{-a.e.}\right\})=1,$$
provided that a sequence of $\mu$-measurable functions $f_n$, $n\in\N$, from $X$ to a complete metric space $Z$, is $\I$-divergent $\mu$-almost everywhere. The measure $\mu$ is assumed complete and $\sigma$-finite. This result is in a sense similar to that obtained in Section 2, however we will observe some asymmetry between these theorems. 

The reasonings in Sections 2 and 3 use the Kuratowski-Ulam theorem and the Fubini theorem, respectively. 
This idea is borrowed from \cite{K}. Theorem \ref{TW}, proved below, is an important fact, useful in Section 2. Some results of the recent paper \cite{BkGW} are exploited in Section 3.

Let us recall some basic notions connected with ideals on $\N$.
We say that an ideal $\mathcal{I}\subset \Pe(\N)$ is {\em admissible} if $\N\notin\mathcal{I}$ and the ideal $\on{Fin}$ of all finite subsets of $\N$ is contained in $\mathcal{I}$. From now on, we will consider only admissible ideals; we will simply call them {\em ideals on $\N$}. 

If $\mathcal{I}$ is an ideal on $
\N$, we say (cf. \cite{KSW}) that a sequence $(x_n)$ in $(X,d)$ is {\em $\mathcal{I}$-convergent} to $x\in X$
(and write $\I$-$\lim_n x_n=x$) if for every $\varepsilon >0$ we have $\{n\in\N\colon d(x_n,x)\geq\varepsilon\}\in\mathcal{I}$. An $\I$-limit of $(x_n)$ is unique, if it exists. Note that, if $\lim_n x_n=x$ then $\I$-$\lim_n x_n=x$, and 
for $\mathcal{I}:=\on{Fin}$, we obtain the usual convergence of $(x_n)$ to $x$.
In the case when $\mathcal{I}$ equals $\mathcal{I}_d$, {\em the density ideal}
which consists of sets $A\subset\N$ with asymptotic density zero (that is, $d(A):=\lim_{n} |A\cap [1,n]|/n=0$), we speak about {\em statistical convergence} (see \cite{Fas}, \cite{Fr}). 
For $\I\neq\on{Fin}$, it can happen that some subsequences and rearrangements of a divergent sequence are $\I$-convergent
(cf. \cite{KSW}), so it is natural to ask how often (for instance, in the sense of the Baire category) such a phenomenon holds. This motivates the studies of \cite{MO} and \cite{BkGW}. We say that a sequence $(x_n)$ is {\em $\I$-divergent} if it is not $\I$-convergent. For some applications of $\I$-convergence in real analysis, see \cite{BDK}, \cite{BGW}, 
\cite{FMRS},  \cite{LR}, \cite{Mr}.

Ideals on $
\N$ can be treated (via the characteristic functions) as subsets of the Polish space $\{0,1\}^\N$,
so they may have the Baire property, be Borel, analytic, coanalytic, and so on.
Several examples of ideals on $\N$ are presented in \cite{KSW} and \cite{Far}.
The following result, due to Jalali-Naini and Talagrand (see \cite[Theorem 1, Section 8]{To}) gives a characterization of ideals on $\N$ with the Baire property. Recall that sets with {\em the Baire property}, in a given metric space, form the smallest $\sigma$-algebra
containing open sets and meager sets in this space. 

\begin{lemma}\label{Tal}
Let $\I$ be an ideal on $\N$. The following conditions are equivalent:
\begin{itemize}
\item $\I$ has the Baire property;
\item there is an infinite sequence $n_1<n_2<\dots$ in $\N$ such that no member of $\I$ contains infinitely many intervals $[n_i,n_{i+1})$ in $\N$.
\end{itemize}
\end{lemma}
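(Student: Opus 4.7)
The plan is to prove the two implications separately, working in the Cantor space $\{0,1\}^\N$ identified with $\Pe(\N)$.

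For $(\Leftarrow)$, suppose such a sequence $n_1<n_2<\dots$ exists, and set $F:=\{A\subseteq\N:[n_i,n_{i+1})\subseteq A\text{ for infinitely many }i\}=\bigcap_k\bigcup_{i\geq k}\{A:[n_i,n_{i+1})\subseteq A\}$. Each inner union is open and dense in $\{0,1\}^\N$, because any finite initial $0$--$1$ segment can be extended by $1$'s on some interval $[n_j,n_{j+1})$ with $j$ sufficiently large, so $F$ is comeager. The hypothesis gives $\I\cap F=\emptyset$, so $\I$ is contained in the meager set $\{0,1\}^\N\setminus F$; in particular $\I$ has the Baire property.

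For $(\Rightarrow)$, I proceed in two steps. First, $\I$ is meager: since $\on{Fin}\subseteq\I$ and $\I$ is hereditary, $\I$ is closed under finite symmetric differences, hence a tail set in $\{0,1\}^\N$. By Kuratowski's topological $0$--$1$ law, a tail set with the Baire property is either meager or comeager. The complementation map $A\mapsto\N\setminus A$ is a self-homeomorphism of $\{0,1\}^\N$ carrying $\I$ onto the dual filter $\I^*:=\{B\subseteq\N:\N\setminus B\in\I\}$; these two sets are disjoint, because otherwise $\N=A\cup(\N\setminus A)$ would belong to $\I$, contradicting admissibility. Two disjoint comeager sets cannot coexist in a Baire space, so $\I$ must be meager.

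Second, I build the sequence $(n_i)$. Fix an increasing cover $\I\subseteq\bigcup_k F_k$ by closed nowhere dense sets. Inductively, having chosen $n_1<\dots<n_k$, I would pick $n_{k+1}>n_k$ so that for every $\sigma\in\{0,1\}^{n_k-1}$, the set of $A\in\{0,1\}^\N$ agreeing with $\sigma$ on $[1,n_k)$ and containing $[n_k,n_{k+1})$ is disjoint from $F_k$. Granting such a choice at every stage, any $A\in\I$ lies in $F_k$ for all $k$ beyond some $k_0$, whence $A$ cannot contain $[n_k,n_{k+1})$ for any $k\geq k_0$, so $A$ contains only finitely many of the intervals---the desired conclusion.

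The main obstacle is producing $n_{k+1}$ at each stage: were the choice to fail for some $\sigma$, compactness of $\{0,1\}^\N$ and closedness of $F_k$ would yield $A\in F_k$ with $A\supseteq\{j<n_k:\sigma(j)=1\}\cup[n_k,\infty)$, i.e.\ $A$ cofinite. Cofinite sets lie outside the proper ideal $\I$, but they can appear in the auxiliary cover $F_k$. The remedy exploits that the set $\on{Cof}$ of cofinite subsets of $\N$ is countable and disjoint from $\I$: by a fusion/diagonal refinement of the meager cover, one arranges that each cofinite limit configuration that might block the recursion at stage $k$ has already been excluded from $F_k$, permitting the choice of $n_{k+1}$ to succeed throughout and completing the construction.
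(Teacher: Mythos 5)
Your $(\Leftarrow)$ direction is correct, and so is the first half of $(\Rightarrow)$: the reduction of ``$\I$ has the Baire property'' to ``$\I$ is meager'' via the tail-set/zero--one-law argument and the complementation homeomorphism onto the dual filter is exactly right. (The paper itself only cites this result of Jalali-Naini and Talagrand, so there is no in-paper proof to compare with; I am judging the argument on its own.) The genuine gap is in the second half of $(\Rightarrow)$, and you have in fact located it yourself: the recursion choosing $n_{k+1}$ requires that \emph{every} $A$ with $A\restriction[1,n_k)=\sigma$ and $[n_k,n_{k+1})\subseteq A$ lie outside $F_k$, which by your compactness analysis can fail exactly when $F_k$ contains the cofinite set $\sigma^{-1}(1)\cup[n_k,\infty)$. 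Nowhere density of $F_k$ only guarantees that \emph{some} extension of $\sigma$ escapes $F_k$, not the all-ones extension, and meager ideals such as $\I_d$ contain sets with arbitrarily long blocks of consecutive integers, so the offending cofinite points really can be limits of points of $\I\cap F_k$; any closed set covering those points of $\I$ must then contain the cofinite limit. The proposed remedy (``a fusion/diagonal refinement of the meager cover'') is not a proof: the blocking configurations at stage $k$ depend on $n_k$, which depends on $F_1,\dots,F_{k-1}$, so one cannot ``pre-exclude'' them from $F_k$ without circularity; and trimming $F_k$ to a closed set avoiding them (e.g.\ intersecting with $\{A\colon [n_k,N)\not\subseteq A\}$) destroys the covering of the part of $\I$ consisting of sets containing $[n_k,N)$, pushing the same problem to later stages. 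Countability of the family of cofinite sets does not by itself resolve this.

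The standard repair uses the ideal structure at the \emph{end} of the argument rather than trying to force the cover to avoid cofinite sets. Write $2^{\N}\setminus\I\supseteq\bigcap_k U_k$ with $U_k$ open dense and decreasing, and choose $n_{k+1}$ so large that every $\sigma\in\{0,1\}^{[1,n_k)}$ admits an extension $\tau_\sigma\in\{0,1\}^{[1,n_{k+1})}$ with $[\tau_\sigma]\subseteq U_k$. If $A$ contains $[n_{k_j},n_{k_j+1})$ for infinitely many $j$, build $B\subseteq A$ by putting $B=\emptyset$ off the intervals $[n_{k_j},n_{k_j+1})$ and letting $B$ follow $\tau_{\sigma_j}$ on $[n_{k_j},n_{k_j+1})$, where $\sigma_j$ is the part of $B$ already constructed; this is consistent with $B\subseteq A$ precisely because $A$ contains the whole interval, so \emph{any} subset of it may be used. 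Then $B\in\bigcap_k U_k$, so $B\notin\I$, and since $\I$ is hereditary, $A\notin\I$. This is the step your argument is missing: you never exploit downward closedness of $\I$ in the construction, and without it the interval-selection scheme cannot be completed.
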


The following theorem is a slight generalization of \cite[Theorem 2.1]{BkGW} which was dealing with sequences of real numbers. We use a similar method of proof. However, we provide details for the reader's convenience.

\begin{theorem} \label{TW}
Let $\I$ be an ideal on $\N$ with the Baire property and let $(x_n)$ be a divergent sequence in a metric space $(X,d)$. Then the sets
$$E(\I,(x_n)):=\{ s\in S\colon (x_{s(n)})\mbox{ \em is }\I\mbox{\em -convergent}\}\quad\mbox{and}\quad
R(\I,(x_n)):=\{ p\in P\colon (x_{p(n)})\mbox{ \em is }\I\mbox{\em-convergent}\}$$ 
are meager in $S$ and $P$, respectively.
\end{theorem}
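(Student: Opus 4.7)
My plan is to reduce the problem to a concrete separation statement about the range of $(x_n)$ and then to combine it with the Jalali-Naini-Talagrand intervals from Lemma \ref{Tal} in a Banach-Mazur-style comeager construction. First I would dispose of the trivial case: if $(x_n)$ has no cluster point in $X$, then neither does any subsequence nor any rearrangement, and since an $\I$-limit is always a cluster point (by admissibility of $\I$, the set $\{n:d(y_n,L)<\ve\}$ must be infinite whenever its complement lies in $\I$), none of these sequences is $\I$-convergent. Thus $E(\I,(x_n))=R(\I,(x_n))=\emptyset$ and there is nothing to prove.

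Assume from now on that $(x_n)$ has at least one cluster point. The key structural step is to find $\delta>0$ and disjoint infinite sets $A_1,A_2\subseteq\N$ with $d(x_n,x_m)\geq\delta$ for all $n\in A_1$ and $m\in A_2$. If there are two distinct cluster points $y_1,y_2$, put $\delta:=d(y_1,y_2)/3$ and $A_i:=\{n:d(x_n,y_i)<\delta\}$. If there is only one cluster point $y$, use that $(x_n)\not\to y$: pick $\ve_0>0$ with $\{n:d(x_n,y)\geq\ve_0\}$ infinite, and take $A_1:=\{n:d(x_n,y)<\ve_0/2\}$, $A_2:=\{n:d(x_n,y)\geq\ve_0\}$, $\delta:=\ve_0/2$. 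Each case is a straight triangle-inequality verification.

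Now fix intervals $I_k=[n_k,n_{k+1})$ supplied by Lemma \ref{Tal}. For $i\in\{1,2\}$ and $k\in\N$ set
$$
V_k^{i}:=\{s\in S:s(n)\in A_i\mbox{ for every }n\in I_k\}.
$$
Each $V_k^{i}$ is relatively clopen in $S$, since membership depends on finitely many coordinates. For density of $G_N^{i}:=\bigcup_{k\geq N}V_k^{i}$ in $S$, take any basic cylinder given by a strictly increasing $s_0:[1,j-1]\to\N$, pick $k\geq N$ with $n_k>j-1$, extend $s_0$ arbitrarily and increasingly on $[j,n_k)$, then fill $I_k$ with $|I_k|$ increasing elements of $A_i$ lying above everything used so far (possible because $A_i$ is infinite), and continue increasingly afterwards. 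Hence $V^{i}:=\bigcap_N G_N^{i}$, and consequently $V:=V^{1}\cap V^{2}$, is comeager in $S$. To show $V\cap E(\I,(x_n))=\emptyset$, assume $s\in V$ and $\I$-$\lim x_{s(n)}=L$; set $\ve:=\delta/3$ and $B_\ve:=\{n:d(x_{s(n)},L)\geq\ve\}\in\I$. The separation forces one of two alternatives: either $d(x_n,L)\geq\ve$ for every $n\in A_1$, or there exists $n_0\in A_1$ with $d(x_{n_0},L)<\ve$, in which case $d(x_m,L)\geq 2\delta/3>\ve$ for every $m\in A_2$. In the first subcase $s\in V^{1}$ supplies infinitely many $k$ with $I_k\subseteq B_\ve$; in the second, $s\in V^{2}$ does likewise. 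Either conclusion contradicts the defining property of $(I_k)$ applied to $B_\ve\in\I$.

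For $R(\I,(x_n))$ only the density argument needs adjustment: basic cylinders in $P$ are specified by finite partial injections, and after forcing $p(I_k)\subseteq A_i$ (by injecting $|I_k|$ fresh elements of $A_i$ into the block) one extends to a bijection by a standard back-and-forth on the free indices. The reservoir on each side is infinite, so this is routine, and the rest of the proof transfers verbatim. I expect the main obstacle to be the structural step producing $A_1,A_2,\delta$; the main subtlety is that the apparently separate ``Cauchy but not convergent'' case is automatically subsumed under ``no cluster point'', because a Cauchy sequence possessing a cluster point must converge. From that point on, the combinatorial argument follows the pattern of \cite[Theorem 2.1]{BkGW}.
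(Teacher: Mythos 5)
Your argument is correct and is essentially the paper's own proof in lightly different clothing: both isolate the trivial ``no cluster point'' case, both extract two infinite, uniformly separated index sets (the paper takes the range of a convergent subsequence versus a subsequence staying away from its limit, with radii $r$ and $2r$), and both use the Jalali--Naini--Talagrand intervals from Lemma \ref{Tal} to build a comeager set of $s\in S$ (resp.\ $p\in P$) sending infinitely many whole blocks $[n_k,n_{k+1})$ into each of the two index sets, which kills $\I$-convergence. The only cosmetic differences are that the paper forces the two kinds of blocks to occur consecutively within a single family of dense open sets, while you use two independent families and conclude via a dichotomy on the putative $\I$-limit; neither change affects correctness.
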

\begin{proof}
Let us show the part concerned with $E(\I,(x_n))$. 
First, consider an easy case when $(x_n)$ does not contain a convergent subsequence. Then $E(\I,(x_n))=\emptyset$.
Indeed, suppose that there exists $s\in E(\I,(x_n))$. Hence $\I$-$\lim_n x_{s(n)}=x$ for some $x\in X$.
But then $\{n\in\N\colon d(x_{s(n)},x)<1/k\}\in\I^\ast$ (where $\I^\ast$ stands for the dual filter of $\I$), for every $k\in\N$, which easily produces a subsequence $(x_{s(n_k)})$ convergent to $x$ in the usual manner. This yields a contradiction.

The remaining case means that there exists a subsequence $(x_{u(n)})$ of $(x_n)$ convergent to some $x\in X$ and
another subsequence $(x_{v(n)})$ which is not convergent to $x$. Clearly, $u,v\in S$. We may assume that there exists $r>0$
such that $d(x_{u(n)},x)\leq r$ and $d(x_{v(n)},x)\geq 2r$ for all $n\in\N$. 

Since $\I$ has the Baire property, by Lemma \ref{Tal}
we can find an infinite sequence $n_1<n_2<\dots$ in $\N$ such that no member of $\I$ contains infinitely many intervals $[n_i,n_{i+1})$. For any $m\in\N$, define $A_m$ as the following set
$$\{ s\in S\colon \exists k\in\N\;(n_k>m \land (\forall j\in [n_k,n_{k+1})\;d(x_{s(j)},x)\leq r) \land 
(\forall j\in [n_{k+1},n_{k+2})\;d(x_{s(j)},x)\geq 2r)) \} .$$
 Note that $\bigcap_{m\in\N}A_m\subset S\setminus E(\I,(x_n))$. Indeed, if $s\in\bigcap_{m\in\N}A_m$, then each of the sets
 $$\{j\in\N\colon \;d(x_{s(j)},x)\leq r\}\quad\mbox{and}\quad\{j\in\N\colon d(x_{s(j)},x)\geq 2r\}$$
 contains infinitely many intervals of the form $[n_k,n_{k+1})$, hence it does not belong to $\I$.
 Thus $(x_{s(n)})$ is not $\I$-convergent. (Indeed, if we suppose that $\I$-$\lim_n x_n =x_0$, the both cases $d(x_0,x)\leq r$ and $d(x_0,x)>r$ lead to a contradiction.) Consequently, it suffices to show that every set $A_m$ is comeager in $S$.
 
 Fix $m\in\N$. We will prove that every open set $U$ from a standard countable base $\mathcal B$ of topology in $S$ (inherited
 from $\N^{\N}$) contains a set $V_U\in\mathcal B$ included in $A_m$. This will demonstrate that $A_m$ contains the dense
 open set $\bigcup_{U\in\mathcal B}V_U$, hence it is comeager.
 
 So, consider a basic open set 
 $$U:=S\cap\left\{s\in\N^\N\colon s\mbox{ extends }(s_1,\dots, s_d)\right\} $$
 where $s_1<s_2<\dots <s_d$ is a fixed sequence.
We may assume that $d\geq m$. Let $k$ be the smallest index with $n_k>d$.
We extend $(s_1,\dots, s_d)$ in three steps.
Firstly, let $s(i):=s_i$ for $i=1,\dots ,d$ and $s(i):=s_d+i-d$ for $i=d+1,d+2,\dots, n_k-1$.
Secondly, pick the smallest index $p_k$ such that $u(p_k)>s(n_k-1)$ and
put $s(i):=u(p_k+i-n_k)$ for $i=n_k,n_k+1,\dots,n_{k+1}-1$.
In the third step, pick the smallest index $q_k$ such that $v(q_k)>s(n_{k+1}-1)$ and
put $s(i):=v(q_k+i-n_{k+1})$ for $i=n_{k+1}, n_{k+1}+1,\dots,n_{k+2}-1$.
Let $\overline{s}:=(s(1),s(2),\dots,s(n_{k+2}-1))$. Then
$$V:=S\cap\left\{ s\in\N^\N\colon s\mbox{ extends }\overline{s}\right\}$$
is an open set contained in $U$. 
This ends the proof for $E(\I,(x_n))$.

The argument for $R(\I,(x_n))$ is analogous. The respective reasoning uses an open set $U$ similar to that considered
above but $S$ is replaced by $P$ and a sequence $(s_1,\dots, s_d)$ is one-to-one. Also, an extension $\overline{s}$
of $(s_1,\dots, s_d)$ should be chosen one-to-one.
\end{proof}

\section{Results on the Baire category}
Recall that a  mapping from a metric space $X$ to a metric space $Z$ is said to have {\em the Baire property} if its preimage of any open set in $Z$ has the Baire property in $X$.
The following lemma belongs to mathematical folklore.

\begin{lemma} \label{LL1}
Let $f$ be a function from a Polish space $X$ to a Polish space $Y$, and $f$ has the Baire property.
Then, for every analytic (coanalytic) set $A\subset Y$, the preimage $f^{-1}[A]$ has the Baire property.
\end{lemma}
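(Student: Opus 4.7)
The plan is to reduce the statement to the classical fact that analytic sets in a Polish space have the Baire property, via the standard characterization of Baire property functions.

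First, I would invoke the Kuratowski theorem that a function $f\colon X\to Y$ between Polish spaces has the Baire property if and only if there is a dense $G_\delta$ set $G\subset X$ such that the restriction $f\restriction G$ is continuous. This is obtained by taking a countable base $(V_n)$ of $Y$, writing each $f^{-1}[V_n]$ as an open set symmetrically different from a meager set, and letting $G$ be the intersection of the comeager sets on which these symmetric differences vanish. Note that $G$, being $G_\delta$ in a Polish space, is itself Polish.

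Next, given an analytic set $A\subset Y$, consider the preimage under the continuous map $g:=f\restriction G\colon G\to Y$. Since continuous preimages of analytic sets in Polish spaces are analytic, $g^{-1}[A]$ is analytic in $G$, hence, by Lusin's theorem (or Nikodym's theorem) that analytic sets have the Baire property in every Polish space, $g^{-1}[A]$ has the Baire property in $G$. Because $G$ is a Borel (even $G_\delta$) subset of $X$, sets with the Baire property in $G$ also have the Baire property in $X$. Then
\[
f^{-1}[A]=g^{-1}[A]\cup\bigl(f^{-1}[A]\cap(X\setminus G)\bigr),
\]
and the second summand is contained in the meager set $X\setminus G$. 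Thus $f^{-1}[A]$ is the union of a set with the Baire property and a meager set, so it has the Baire property.

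For the coanalytic case, if $A\subset Y$ is coanalytic then $Y\setminus A$ is analytic, so by the previous paragraph $f^{-1}[Y\setminus A]=X\setminus f^{-1}[A]$ has the Baire property; since the family of Baire-property sets is closed under complements, $f^{-1}[A]$ has the Baire property as well. The main (and essentially only) obstacle is the appeal to the Kuratowski characterization of Baire-property functions and to the classical theorem that analytic sets are Baire-measurable; once these standard facts are quoted, the rest is formal bookkeeping.
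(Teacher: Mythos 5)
Your proof is correct, but it follows a genuinely different route from the paper. The paper's argument is purely set--algebraic: it writes the analytic set $A$ as the result of the Suslin operation applied to a scheme of closed sets, $A=\bigcup_{z\in\N^\N}\bigcap_{n}A_{z|n}$, pulls this representation back through $f$ (preimages of closed sets have the Baire property since $f$ does), and then invokes the stability of the Baire-property $\sigma$-algebra under the Suslin operation. You instead localize: you use Kuratowski's characterization that a Baire-property function between Polish (second countable) spaces is continuous on a dense $G_\delta$ set $G$, pull $A$ back under the continuous restriction $f\res G$ to get an analytic, hence Baire-property, subset of the Polish space $G$, and then transfer back to $X$ by noting that $X\setminus G$ is meager. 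Your transfer step is fine, though the justification you give ("because $G$ is Borel, even $G_\delta$") is slightly misplaced: what makes relative Baire-property sets in $G$ have the Baire property in $X$, and what makes meager-in-$G$ sets meager in $X$, is that $G$ is \emph{dense} (comeager), not merely Borel. Both routes rest on comparable classical inputs -- your appeal to the Lusin--Sierpi\'nski/Nikodym theorem that analytic sets have the Baire property is itself usually proved via the Suslin operation, so the machinery is ultimately the same. One advantage of the paper's formulation is that the identical proof template carries over verbatim to the measure-theoretic analogue (Lemma \ref{LK1}), since completed $\sigma$-finite measure algebras are also closed under the Suslin operation, whereas your category-specific localization argument would have to be replaced there by a separate Lusin-type approximation.
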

\begin{proof} It suffices to show the assertion if $A$ is analytic.
We can express $A$ as the result of the Suslin operation \cite[Theorem 4.1.13]{Sr}
as follows $A=\bigcup_{z\in\N^{\N}}\bigcap_{n\in\N}A_{z|n}$
where $\{ A_s\colon s\in\bigcup_{n\in\N}\N^n\}$ is a family of closed subsets of $X$ and $z|n:=(z(1),\dots, z(n))$.
Then $f^{-1}[A]=\bigcup_{z\in\N^{\N}}\bigcap_{n\in\N}f^{-1}[A_{z|n}]$ where the sets $f^{-1}[A_{z|n}]$ have
the Baire property in $X$. Since the $\sigma$-algebra of sets with the Baire property is stable under the Suslin
operation \cite[Example 3.5.21, Theorem 3.5.22]{Sr}, so $f^{-1}[A]$ has the Baire property in $X$.
\end{proof}

\begin{lemma} \label{LL2}
Let $(f_n)$ be a sequence of functions, with the Baire property, from a Polish space $(X,d)$ to a complete metric space $(Z,\rho)$.
Assume that $\I$ is an ideal on $\N$ which is either analytic or coanalytic. Then the set
$$B:=\{(x,s) \in X \times S \colon (f_{s(n)}(x)) \mbox{ is } \I\mbox{-convergent}\}$$
has the Baire property.
\end{lemma}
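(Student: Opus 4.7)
The plan is to exhibit $B$ as the preimage of a sufficiently well-behaved subset $C \subset Z^\N$ under an evaluation map with the Baire property, and then invoke Lemma~\ref{LL1}. First I reduce to the case when $Z$ is Polish: since each $f_n$ has the Baire property, it is continuous on some comeager $G_\delta$ set $G_n \subset X$, so all $f_n$ are continuous on $G := \bigcap_n G_n$, and the closure $Z_0$ of $\bigcup_n f_n(G)$ is separable and complete, hence Polish. Because $(X \setminus G) \times S$ is meager in $X \times S$ and the $\I$-limit of any sequence in the closed set $Z_0$ automatically lies in $Z_0$, it suffices to prove that $B \cap (G \times S)$ has the Baire property in the Polish space $G \times S$. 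The evaluation map $\phi : G \times S \to Z_0^\N$, $\phi(x,s) := (f_{s(n)}(x))_n$, is then continuous, since its $n$-th coordinate is the composition of the continuous projection $(x,s) \mapsto (x, s(n))$ with the function $(x,m) \mapsto f_m(x)$ on $G \times \N$, continuous since each $f_m$ is continuous on $G$.

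The heart of the argument is a description of $C := \{(y_n) \in Z_0^\N : (y_n)\mbox{ is }\I\mbox{-convergent}\}$ using only a countable dense set $\{z_i\}_{i \in \N} \subset Z_0$. I will show that
$$(y_n) \in C \iff \forall k \in \N\;\exists i \in \N \quad \{n \in \N : \rho(y_n, z_i) \geq 1/k\} \in \I.$$
The forward direction just approximates the $\I$-limit $y$ by a $z_i$ at distance $<1/(2k)$. For the converse, properness of $\I$ forces the complements in $\N$ of the witnessing members of $\I$ for two indices $k, k'$ to meet, and the triangle inequality then gives $\rho(z_{i_k}, z_{i_{k'}}) < 1/k + 1/k'$; completeness of $Z_0$ produces $y := \lim_k z_{i_k}$, which one verifies is the $\I$-limit of $(y_n)$.

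Setting $D_{k,i} := \{(y_n) \in Z_0^\N : \{n : \rho(y_n, z_i) \geq 1/k\} \in \I\}$, each such set is the preimage of $\I$ under a Borel map $Z_0^\N \to \{0,1\}^\N$, so $D_{k,i}$ is analytic (respectively coanalytic) whenever $\I$ is. Since both classes are closed under countable unions and intersections, $C = \bigcap_k \bigcup_i D_{k,i}$ has the same complexity, and Lemma~\ref{LL1} applied to the continuous map $\phi$ yields that $\phi^{-1}[C] = B \cap (G \times S)$ has the Baire property in $G \times S$; hence so does $B$ in $X \times S$. The main obstacle is precisely this dense-set characterization: the naive formulation ``$\exists y \in Z_0,\;\forall k,\;\{n : \rho(y_n, y) \geq 1/k\} \in \I$'' uses an uncountable existential over a Polish space, and while such a projection sends analytic sets to analytic sets, it sends coanalytic sets only into $\Sigma^1_2$, which need not be Baire-measurable. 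Trading the uncountable existential for a countable one via a Cauchy argument is what makes Lemma~\ref{LL1} applicable in the coanalytic case.
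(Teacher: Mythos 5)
Your argument is correct, but it takes a genuinely different route from the paper's. The paper keeps the $f_n$ as Baire-property maps throughout: it invokes Dems' theorem to replace $\I$-convergence of $(f_{s(n)}(x))$ by the $\I$-Cauchy condition (thereby trading the uncountable existential over the limit point in $Z$ for a countable existential over $N\in\N$), writes $B=\bigcap_k\bigcup_N g_{kN}^{-1}[\I]$ for maps $g_{kN}\colon X\times S\to\{0,1\}^{\N}$, and establishes that each $g_{kN}$ has the Baire property via the Kuratowski-type theorem that a map of two variables, continuous in one and Baire-measurable in the other, is jointly Baire-measurable. You instead pass to a comeager $G_\delta$ set $G$ on which all $f_n$ are continuous, which makes the evaluation map $\phi(x,s)=(f_{s(n)}(x))_n$ genuinely continuous and pushes all of the descriptive-set-theoretic content into a single set $C$ of $\I$-convergent sequences in $Z_0^{\N}$. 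Your countable-dense-set characterization of $C$ plays exactly the role that Dems' theorem plays in the paper --- both use completeness to eliminate the quantifier over the limit point --- and your closing remark about $\Sigma^1_2$ correctly identifies why this elimination is indispensable in the coanalytic case (in the purely analytic case the naive projection would still work). Your version buys a cleaner structure: no separate-continuity theorem, and the complexity computation is isolated in the sequence space rather than carried out in $X\times S$.

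One caveat: the step ``each $f_n$ with the Baire property is continuous on a comeager set'' is the classical Kuratowski theorem only when the target space is second countable, whereas the lemma allows $Z$ to be an arbitrary complete metric space; for non-separable $Z$ the usual proof (which runs over a countable base of $Z$) does not apply, so this reduction needs either a separability hypothesis or an additional argument. To be fair, the paper's own proof has a parallel unexamined point --- it treats $x\mapsto\rho(f_{s(j)}(x),f_{s(N)}(x))$ as Baire-measurable ``as a composition with a continuous function,'' which requires the pair map into $Z\times Z$ to have the Baire property, again not immediate for non-separable $Z$ --- so the subtlety is inherited from the statement rather than introduced by you, but your formulation leans on it a little more heavily and you should flag it.
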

\begin{proof}
Since $(Z,\rho)$ is complete, by the theorem of Dems \cite{D}, the $\I$-convergence of $(f_{s(n)}(x))$ is equivalent to the $\I$-Cauchy condition
$$\forall{\varepsilon>0} \ \exists{N \in \N} \ \{n \in \N \colon \rho(f_{s(n)}(x),f_{s(N)}(x))>\varepsilon\} \in \I.$$
Hence, the set $B$ can be expressed in the form
$$B=\bigcap_{k \in \N} \bigcup_{N \in \N} \{(x,s) \in X \times S \colon \{n \in \N \colon \rho(f_{s(n)}(x),f_{s(N)}(x))>{1}/{k}\} \in \I\}.$$
For fixed $k,N \in \N$ define $g_{kN} \colon X \times S \to \{0,1\}^{\N}$ by the formula
$$g_{kN}(x,s):=(\chi_{\{n \in \N \colon \rho(f_{s(n)}(x),f_{s(N)}(x))>{1}/{k}\}}(j))_{j \in \N}.$$

{\bf Claim 1.}
For every $x \in X$, the function $g_{kN}(x, \cdot)$ is continuous on $S$.

Indeed, since the spaces $\N^{\N}$ and $\{0,1\}^{\N}$ are equipped with the product topologies, it suffices to show that for any sequence $(s_m)$ in $S$ and a point $s \in S$ such that 
$$\lim_{m\to\infty}s_m(n) = s(n)\quad\mbox{for all }n\in\N ,$$ 
we have
$$\lim_{m\to\infty}g_{kN}(x,s_m)(j)= g_{kN}(x,s)(j)\quad\mbox{for all }j \in \N .$$
 Fix $j\in\N$.
Since $\N$ is discrete, $s_m(j)=s(j)$ and $s_m(N)=s(N)$ for all $m \geq m_0$ where $m_0 \in \N$ is sufficiently large. For these numbers $m \in \N$ we have $g_{kN}(x,s_m)(j)=g_{kN}(x,s)(j)$.   

{\bf Claim 2.} 
For every $s \in S$, the function $g_{kN}(\cdot,s)$ has the Baire property.
 
Indeed, fix $j \in \N$ and let $U_i:=\{w \in \{0,1\}^{\N} \colon w(j)=i\}$ where $i \in \{0,1\}$. It suffices to prove that the preimage of $U_i$ with respect to $g_{kN}(\cdot,s)$ has the Baire property. But this preimage equals either
$$\{x \in X \colon \rho(f_{s(j)}(x),f_{s(N)}(x)) \leq {1}/{k}\}\quad \mbox{if } i=0,$$
$$\mbox{or}\quad\{x \in X \colon \rho(f_{s(j)}(x),f_{s(N)}(x)) > {1}/{k}\}\quad \mbox{if } i=1.$$
Note that the mapping $x\mapsto \rho(f_{s(j)}(x),f_{s(N)}(x))$ has the Baire property, as the respective composition with
a continuous function.
Hence, in the both cases, the considered preimage has the Baire property.

By \cite[Theorem 3.1.30]{Sr} and Claims 1,2, we infer that the function $g_{kN}$ has the Baire property. Note that $B=g_{kN}^{-1}[\I]$. Since $\I$ is analytic or coanalytic, $B$ has the Baire property by Lemma \ref{LL1}.
\end{proof}

\begin{theorem} \label{TW2}
Let $(f_n)$ be a sequence of functions, with the Baire property, from a Polish space $(X,d)$ to a complete metric space $(Z,\rho)$. Assume that $\I$ is an ideal on $\N$ which is either analytic or coanalytic.
Then the following conditions are equivalent:
\begin{itemize}
\item[(i)] the set 
$\{x \in X \colon (f_n(x)) \mbox{ is divergent}\}$ 
is comeager in $X$;
\item[(ii)] the set $\{x \in X \colon\{s \in S \colon (f_{s(n)}(x)) \mbox{ is } \I\mbox{-convergent}\} \mbox{ is meager}\}$ is comeager in $X$;
\item[(iii)] the set $\{(x,s) \in X \times S \colon (f_{s(n)}(x)) \mbox{ is } \I\mbox{-convergent}\}$ 
is meager in $X\times S$;
\item[(iv)] the set $\{s \in S \colon \{x \in X \colon (f_{s(n)}(x)) \mbox{ is } \I\mbox{-convergent}\} \mbox{ is meager}\}$ is comeager in $S$.
\end{itemize}
\end{theorem}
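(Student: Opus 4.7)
The plan is to close the cycle (i) $\Rightarrow$ (ii) $\Leftrightarrow$ (iii) $\Leftrightarrow$ (iv) $\Rightarrow$ (i), using three ingredients: Theorem \ref{TW} applied section by section, Lemma \ref{LL2} to secure the Baire property of the set
\[
B := \{(x,s) \in X \times S : (f_{s(n)}(x)) \text{ is } \I\text{-convergent}\},
\]
and the Kuratowski--Ulam theorem to pass between meagerness of $B$ in $X \times S$ and meagerness of its vertical and horizontal sections.

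For (i) $\Rightarrow$ (ii), I would let $C$ be the comeager set of $x \in X$ on which $(f_n(x))$ is divergent. For each such $x$, the sequence $(f_n(x))$ is a divergent sequence in the complete metric space $(Z,\rho)$, so Theorem \ref{TW} yields that the vertical section $B_x = E(\I,(f_n(x)))$ is meager in $S$. Since $C$ is comeager, the set displayed in (ii), which contains $C$, is also comeager.

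The three-way equivalence of (ii), (iii), (iv) follows directly from the Kuratowski--Ulam theorem, once it is known that $B$ has the Baire property in the Polish product $X \times S$; this is precisely the content of Lemma \ref{LL2}. For such a $B$, the Kuratowski--Ulam theorem asserts that $B$ is meager in $X \times S$ iff $\{x \in X: B_x \text{ meager in } S\}$ is comeager in $X$ iff $\{s \in S: B^s \text{ meager in } X\}$ is comeager in $S$, which are exactly conditions (iii), (ii) and (iv).

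Finally, (ii) $\Rightarrow$ (i) is handled by contraposition. If the set in (i) is not comeager, then $D := \{x \in X : (f_n(x)) \text{ converges}\}$ is non-meager in $X$. For every $x \in D$, each subsequence $(f_{s(n)}(x))$ converges in the usual sense, and since $\on{Fin} \subset \I$, this forces $B_x = S$, which is not meager. Hence $D$ is disjoint from the set in (ii), so the latter is not comeager, contradicting (ii). The only delicate step throughout is securing the Baire property of $B$, and this is the technical content already packaged in Lemma \ref{LL2}; once it is available, the rest is an essentially formal application of Theorem \ref{TW} sectionwise and of the Kuratowski--Ulam theorem in the Polish setting.
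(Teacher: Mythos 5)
Your proposal is correct and follows essentially the same route as the paper: Theorem \ref{TW} applied sectionwise gives (i)$\Rightarrow$(ii), Lemma \ref{LL2} plus the Kuratowski--Ulam theorem and its converse give (ii)$\Leftrightarrow$(iii)$\Leftrightarrow$(iv), and your contrapositive argument for (ii)$\Rightarrow$(i) is just a rephrasing of the paper's direct observation that every $x$ at which $(f_n(x))$ converges lies outside the set in (ii). No gaps.
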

\begin{proof}
Denote the  sets in the statements (i) and (ii) by $A$ and $H$, respectively.
To prove (i)$\Rightarrow$(ii) assume that $A$ is comeager. If $x\in A$ then, by Theorem \ref{TW}, the set $E(\I,(f_n(x)))$ is meager in $S$. Hence the set $H$ in the statement (ii) contains $A$, so it is comeager in $X$.

To show (ii)$\Rightarrow$(i) assume that $H$ is comeager. Let $x\in X\setminus A$. Then for every $s\in S$, the sequence $(f_{s(n)})$ is convergent,
hence $x\in X\setminus H$. So $A$ is comeager.

Let $B$ stand for the set in the statement (iii). Denote by $B_x$ and $B^s$ the respective sections of $B$, if $x\in X$ and $s\in S$. We have
 $$B_x=\{s \in S \colon (f_{s(n)}(x)) \mbox{ is } \I\mbox{-convergent}\}\quad\mbox{and}\quad
 B^s=\{x \in X \colon (f_{s(n)}(x)) \mbox{ is } \I\mbox{-convergent}\}.$$
 Thanks to Lemma \ref{LL2}, $B$ has the Baire property.
 Consequently, the equivalences (ii)$\Leftrightarrow$(iii)$\Leftrightarrow$(iv) follow from
 the Kuratowski-Ulam theorem \cite{Ox} and its converse.
\end{proof}

Observe that the analogue of Theorem \ref{TW2} for rearrangements (with $S$ replaced by $P$) is true, and
the proof is similar.

The implications (i)$\Rightarrow$(iv) in the both theorems yield the following corollary.

\begin{corollary} \label{CC1}
Let $(f_n)$ be a sequence of functions, with the Baire property, from a Polish space $(X,d)$ to a complete metric space $(Z,\rho)$. Assume that $\I$ is an ideal on $\N$ which is either analytic or coanalytic. If the sequence $(f_n)$ is divergent on a comeager set in $X$,
then the sets
$$\{s \in S \colon \{x \in X \colon (f_{s(n)}(x)) \mbox{ is } \I\mbox{-convergent}\} \mbox{ is meager}\};$$
$$\{p \in P \colon \{x \in X \colon (f_{p(n)}(x)) \mbox{ is } \I\mbox{-convergent}\} \mbox{ is meager}\},$$
are comeager in $S$ and $P$, respectively.
\end{corollary}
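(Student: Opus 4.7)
The plan is to observe that Corollary \ref{CC1} is simply the specialization of Theorem \ref{TW2} and its rearrangement analogue (noted in the remark just before the corollary) under the hypothesis (i). Concretely, the assumption that $(f_n)$ is divergent on a comeager set of $X$ is literally condition (i) of Theorem \ref{TW2}, and the first set whose comeagerness we must establish is literally the one appearing in condition (iv) of that theorem. So the first half follows immediately from the implication (i)$\Rightarrow$(iv) of Theorem \ref{TW2}.

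For the second half, concerning rearrangements in $P$, the same reasoning applies once we invoke the $P$-analogue of Theorem \ref{TW2}. That analogue has an identical proof: one replaces $S$ by $P$ everywhere and appeals to the $R(\I,(x_n))$ half of Theorem \ref{TW} in place of the $E(\I,(x_n))$ half. The supporting Lemma \ref{LL2} also carries over verbatim, because Claims 1 and 2 only exploit the product topology on $\N^{\N}$ and the discreteness of $\N$, not the monotonicity of elements of $S$; and $P$ is likewise a Polish $G_\delta$ subspace of $\N^{\N}$, so Lemma \ref{LL1} still applies to transfer the Baire property through the preimage under $g_{kN}$.

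There is essentially no obstacle in the corollary itself; all substantive content has already been absorbed into Theorem \ref{TW}, which delivers meagerness of $E(\I,(f_n(x)))$ for each divergent $(f_n(x))$ (using the Baire property of $\I$, valid since analytic and coanalytic ideals both have the Baire property), and into Lemma \ref{LL2}, which certifies the Baire property of the joint set $B\subset X\times S$ needed to invoke the Kuratowski--Ulam theorem. The corollary is then just the downstream consequence of chaining (i)$\Rightarrow$(iv) in both the subsequence and the rearrangement versions of Theorem \ref{TW2}. The only point that is worth writing out explicitly, to avoid leaving anything implicit to the reader, is the one-line verification that analytic/coanalytic ideals have the Baire property, so that Theorem \ref{TW} is genuinely applicable to $(f_n(x))$ for each $x$ in the comeager divergence set.
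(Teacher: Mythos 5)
Your proposal is correct and matches the paper's own argument: the corollary is obtained exactly as you say, by invoking the implication (i)$\Rightarrow$(iv) of Theorem \ref{TW2} together with its rearrangement analogue for $P$ (whose proof, as the paper notes, is the same with $S$ replaced by $P$ and the $R(\I,(x_n))$ part of Theorem \ref{TW} used instead). Your extra remarks, that Lemma \ref{LL2} transfers verbatim to $P$ and that analytic/coanalytic ideals have the Baire property so Theorem \ref{TW} applies, are accurate but already absorbed into the cited theorems.
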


The above corollary for subsequences generalizes a theorem by Kallman \cite[Thm. 3.1]{K}, where the case
$\I=\on{Fin}$ with $Z$ being a separable Banach space, was considered. A technique using the Kuratowski-Ulam theorem appears in the proofs of the both results. In fact, the result of Kallman and its proof were an inspiration for our studies in this paper. Our reasoning is similar, however we had to overcome more difficulties.

\section{A result for subsequences in the measure case}
We will use the probability measure space $(S, \mathcal A, \lambda)$ introduced in Section 1. Let us start from the following measure counterparts of Lemmas \ref{LL1} and \ref{LL2}.

\begin{lemma} \label{LK1}
Let $(W,\mathfrak N, \tau)$ be a $\sigma$-finite complete measure space and let $f$ be an $\mathfrak N$-measurable function from $W$ to a Polish space $Y$.
Then, for every analytic (coanalytic) set $A\subset Y$, the preimage $f^{-1}[A]$ belongs to $\mathfrak N$.
\end{lemma}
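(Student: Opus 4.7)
The plan is to mimic the argument used for Lemma \ref{LL1}, replacing the Baire-category framework by its measure-theoretic counterpart. First I would reduce to the analytic case: if $A \subset Y$ is coanalytic then $Y\setminus A$ is analytic and
$$f^{-1}[A]=W\setminus f^{-1}[Y\setminus A],$$
so the coanalytic case follows from the analytic one together with the fact that $\mathfrak N$ is a $\sigma$-algebra.

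Next, I would represent the analytic set $A$ by the Suslin operation applied to closed sets, exactly as in the proof of Lemma \ref{LL1}. By \cite[Theorem 4.1.13]{Sr} there exists a family $\{A_s\colon s\in\bigcup_{n\in\N}\N^n\}$ of closed subsets of $Y$ such that
$$A=\bigcup_{z\in\N^{\N}}\bigcap_{n\in\N}A_{z|n}.$$
Since $f$ is $\mathfrak N$-measurable and each $A_{z|n}$ is closed (hence Borel), we have $f^{-1}[A_{z|n}]\in\mathfrak N$, and therefore
$$f^{-1}[A]=\bigcup_{z\in\N^{\N}}\bigcap_{n\in\N}f^{-1}[A_{z|n}].$$
Thus $f^{-1}[A]$ is obtained by applying the Suslin operation to elements of $\mathfrak N$.

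The decisive step is then to invoke the classical theorem of Lusin--Sierpi\'nski saying that, in a complete $\sigma$-finite measure space, the $\sigma$-algebra of measurable sets is stable under the Suslin operation; this is the measure-theoretic analogue of \cite[Example 3.5.21, Theorem 3.5.22]{Sr} used in Lemma \ref{LL1}. This directly yields $f^{-1}[A]\in\mathfrak N$.

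The only point requiring care is that the standard formulation of the Lusin--Sierpi\'nski theorem is usually stated for finite complete measures, whereas here $\tau$ is only $\sigma$-finite. The main obstacle is therefore to pass from the finite to the $\sigma$-finite setting; this is handled routinely by decomposing $W=\bigsqcup_{n\in\N}W_n$ with $\tau(W_n)<\infty$, applying the finite-measure version of the result to $f\res W_n$ on each $(W_n,\mathfrak N\res W_n,\tau\res W_n)$, and taking the countable union of the resulting measurable preimages. Once this reduction is in place, the proof parallels that of Lemma \ref{LL1} line by line.
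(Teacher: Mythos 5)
Your proposal is correct and follows essentially the same route as the paper: the authors also transfer the Suslin representation of $A$ through $f^{-1}$ and invoke the stability of the complete measure $\sigma$-algebra under the Suslin operation, citing \cite[Example 3.5.20, Theorem 3.5.22]{Sr}. Your extra reduction from $\sigma$-finite to finite measures is harmless but unnecessary, since the cited result already covers complete $\sigma$-finite measure spaces.
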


The proof is analogous to that of Lemma \ref{LL1} -- we use the fact that the $\sigma$-algebra $\M$ is stable under the Suslin operation \cite[Example 3.5.20, Theorem 3.5.22]{Sr}.

\begin{lemma} \label{LK2}
Let $(X, \M,\mu)$ be a $\sigma$-finite complete measure space and
let $(f_n)$ be a sequence of $\M$-measurable functions from $X$ to a complete metric space $(Z,\rho)$.
Assume that $\I$ is an ideal on $\N$ which is either analytic or coanalytic. Then the set
$$B:=\{(x,s) \in X \times S \colon (f_{s(n)}(x)) \mbox{ is } \I\mbox{-convergent}\}$$
belongs to the completion $\M\otimes\A$ of the product $\sigma$-algebra of $\M$ and $\A$.
\end{lemma}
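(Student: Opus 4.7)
The plan is to follow the proof of Lemma \ref{LL2} essentially word for word, substituting $\M\otimes\A$-measurability for the Baire property at every step. By the theorem of Dems \cite{D}, since $(Z,\rho)$ is complete, $\I$-convergence of $(f_{s(n)}(x))$ is equivalent to the $\I$-Cauchy condition, so I would write
$$B=\bigcap_{k\in\N}\bigcup_{N\in\N} B_{kN},\quad B_{kN}:=\{(x,s)\in X\times S \colon \{n\in\N\colon \rho(f_{s(n)}(x),f_{s(N)}(x))>1/k\}\in\I\}.$$
Defining $g_{kN}\colon X\times S\to\{0,1\}^\N$ by exactly the same formula as in Lemma \ref{LL2}, one has $B_{kN}=g_{kN}^{-1}[\I]$, and the task reduces to showing that each $g_{kN}$ is measurable as a map from $(X\times S,\M\otimes\A)$ into the Polish space $\{0,1\}^\N$; then Lemma \ref{LK1}, applied to the $\sigma$-finite complete product space $(X\times S,\M\otimes\A,\mu\times\lambda)$, will finish the argument.

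Joint measurability of $g_{kN}$ is to be established through two separate-variable claims. For every $x\in X$, the section $g_{kN}(x,\cdot)\colon S\to\{0,1\}^\N$ is continuous, by the same discrete-coordinates argument as Claim 1 in the proof of Lemma \ref{LL2} (since $s_m\to s$ in $S$ forces $s_m(j)=s(j)$ and $s_m(N)=s(N)$ for large $m$). For every $s\in S$, the section $g_{kN}(\cdot,s)\colon X\to\{0,1\}^\N$ is $\M$-measurable, because each of its coordinates is the characteristic function of either $\{x\in X\colon \rho(f_{s(j)}(x),f_{s(N)}(x))>1/k\}$ or its complement, and the real-valued map $x\mapsto \rho(f_{s(j)}(x),f_{s(N)}(x))$ is $\M$-measurable as the composition of the $\M$-measurable map $x\mapsto(f_{s(j)}(x),f_{s(N)}(x))$ with the continuous distance function on $Z\times Z$.

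The last step would invoke the classical Carathéodory joint-measurability theorem: a function on $X\times Y$ which is measurable in the first variable and continuous in the second, with $Y$ separable metrizable, is jointly $\M\otimes\mathcal{B}(Y)$-measurable. Applied to $g_{kN}$ with $Y=S$, this gives $\M\otimes\mathcal{B}(S)$-measurability. Since $\mathcal D$ contains the Borel sets of $\{0,1\}^\N$ and $\A=h^{-1}[\mathcal D]$ for the homeomorphism $h$ of Section~1, we have $\mathcal{B}(S)\subset\A$, so $g_{kN}$ is measurable with respect to the completion $\M\otimes\A$. Applying Lemma \ref{LK1} to the analytic or coanalytic set $\I\subset\{0,1\}^\N$ then yields $B_{kN}\in\M\otimes\A$, and hence $B\in\M\otimes\A$ as a countable Boolean combination. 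The principal technical point is the appeal to the Carathéodory joint-measurability theorem, which replaces the use of \cite[Theorem 3.1.30]{Sr} in Lemma \ref{LL2}; apart from that, the argument is a routine measure-theoretic transcription.
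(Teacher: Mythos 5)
Your proposal is correct and is essentially the paper's own argument: the paper proves Lemma \ref{LK2} by repeating the proof of Lemma \ref{LL2} with $\M\otimes\A$-measurability in place of the Baire property, getting joint measurability of $g_{kN}$ from separate continuity in $s$ and measurability in $x$, and then applying Lemma \ref{LK1} to $g_{kN}^{-1}[\I]$. Note only that the ``Carath\'eodory joint-measurability theorem'' you invoke is exactly \cite[Theorem 3.1.30]{Sr}, the reference the paper itself uses, so this is the same step rather than a replacement.
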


The proof is analogous to that of Lemma \ref{LL2}. In the final part, we use \cite[Theorem 3.1.30]{Sr} and we infer that the respective function $g_{kN}$ is $\M\otimes\A$-measurable. Then, by Lemma \ref{LK1}, the set $B=g_{kN}^{-1}[\I]$ is in $\M\otimes\A$.

If $\I$ is an ideal on $\N$, a function $f\colon\N\to\N$ is called {\em $\I$-invariant} if $A\in\I\Leftrightarrow f[A]\in\I$ for every $A\subset\N$ (see \cite{BGS}). We say that $\I$ has {\em property} (G) if 
$\lambda(\{s\in S\colon s \mbox{ is }\I\mbox{-invariant}\})=1$ (see \cite{BkGW}).
Note that the density ideal $\I_d$ has property (G); some other examples are given in \cite{BkGW}.

The following fact was proved in \cite[Theorem 3.4]{BkGW} for a sequence of reals, however it can be easily generalized to a sequence in any metric space, with the same proof.

\begin{lemma} \cite[Theorem 3.4]{BkGW} \label{LK3}
Assume that $\I$ is an analytic or coanalytic ideal on $\N$, having property (G). For a sequence $(z_n)$ in a metric space
$(Z,\rho)$, the following conditions are equivalent:
\begin{itemize}
\item[(I)] $(z_n)$ is $\I$-convergent;
\item[(II)] $\lambda (\{s\in S\colon (z_{s(n)}) \mbox{ is }\I\mbox{-convergent}\})=1$.
\end{itemize}
\end{lemma}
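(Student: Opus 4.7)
The plan is to mirror the argument of \cite[Theorem 3.4]{BkGW}, which was written for sequences of reals, observing that the entire proof only uses metric structure: every occurrence of $|x_n-x|$ can be replaced by $\rho(z_n,z)$ without touching the combinatorial core. So nothing about the real line enters, and the generalization to a sequence $(z_n)$ in a complete metric space $(Z,\rho)$ is obtained essentially by a change of notation. I will split into the two implications, the first being elementary (a direct application of property (G)) and the second requiring the measure-theoretic combinatorics of \cite{BkGW}.

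For (I)$\Rightarrow$(II), the plan is to reduce to $\I$-invariant sequences $s$ and exploit $\I$-invariance directly. Suppose $\I$-$\lim_n z_n = z$, and fix an $\I$-invariant $s\in S$. For each $\varepsilon>0$ set $A_\varepsilon:=\{n\in\N\colon \rho(z_n,z)\geq\varepsilon\}\in\I$. Observe that $s\bigl[s^{-1}[A_\varepsilon]\bigr]=A_\varepsilon\cap s[\N]\subset A_\varepsilon\in\I$, so by $\I$-invariance of $s$ we conclude $s^{-1}[A_\varepsilon]\in\I$, i.e., $\{n\colon \rho(z_{s(n)},z)\geq\varepsilon\}\in\I$. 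Thus $(z_{s(n)})$ is $\I$-convergent to $z$. Because property (G) guarantees that $\lambda$-a.e.\ $s$ is $\I$-invariant, the set in (II) has $\lambda$-measure 1.

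For (II)$\Rightarrow$(I), I will argue by contraposition, dividing into two cases exactly as in the proof of Theorem \ref{TW}. If $(z_n)$ has no usual convergent subsequence, then no subsequence can be $\I$-convergent either, since the sets $\{n\colon \rho(z_{s(n)},z)<1/k\}$ would lie in $\I^\ast$ and hence be infinite, yielding a usual convergent subsequence; then the set in (II) is empty. Otherwise, $(z_n)$ has a convergent subsequence $(z_{u(n)})\to z$ but is $\I$-divergent; standard manipulation gives some $r>0$ and indices $V\notin\I$ with $\rho(z_n,z)\geq 2r$ for $n\in V$, together with an analogous near-$z$ set. Following the argument of \cite{BkGW}, a Borel--Cantelli-type computation, combined with property (G) used again to pass $\I$-positivity between $\N$ and $s[\N]$ via $\I$-invariance, shows that for $\lambda$-a.e.\ $s$ the subsequence $(z_{s(n)})$ meets both clusters in $\I$-positive ways, hence fails to be $\I$-convergent.

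The main obstacle is this last step in (II)$\Rightarrow$(I): unlike in the first implication, the putative $\I$-limit of $(z_{s(n)})$ is not given in advance and may in principle depend on $s$, so one cannot just apply the $\I$-invariance of $s$ to a single fixed set $A_\varepsilon$. The trick, taken from \cite{BkGW}, is to work instead with the two index sets witnessing the $\I$-divergence of $(z_n)$ simultaneously and to use $\I$-invariance to transport $\I$-positivity in both directions between $\N$ and the random range $s[\N]$; checking that this transfer is genuinely metric and not real-variable in nature is the one point at which care is needed, but no new idea is required.
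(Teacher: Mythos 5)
Your (I)$\Rightarrow$(II) is correct and complete: for an $\I$-invariant $s$ and $A_\varepsilon:=\{n\in\N\colon\rho(z_n,z)\geq\varepsilon\}\in\I$ one has $s[s^{-1}[A_\varepsilon]]=A_\varepsilon\cap s[\N]\in\I$, hence $s^{-1}[A_\varepsilon]\in\I$ by invariance, and property (G) finishes; this is in the spirit of the paper, which merely remarks that the real-line proof of \cite[Theorem 3.4]{BkGW} carries over verbatim to metric targets. The genuine gap is in (II)$\Rightarrow$(I). The two-cluster scheme you import from Theorem \ref{TW} does not survive the passage from category to measure. The ``near'' cluster $U:=\{n\colon\rho(z_n,z)\leq r\}$ around the limit $z$ of a usual convergent subsequence is only guaranteed to be infinite, not $\I$-positive. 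If $U\in\I$ --- which is perfectly possible for an $\I$-divergent sequence, e.g.\ $\I=\I_d$ and $z_n=0$ for $n$ a square, $z_n=1$ for $n$ even non-square, $z_n=2$ for $n$ odd non-square, with $z=0$ --- then $s^{-1}[U]\in\I$ for every $\I$-invariant $s$, so the key claim that $(z_{s(n)})$ ``meets both clusters in $\I$-positive ways'' is false for almost every $s$. Moreover, knowing only that $s^{-1}[V]\notin\I$ for the far cluster $V:=\{n\colon\rho(z_n,z)\geq 2r\}$ excludes $\I$-limits near the particular point $z$, but not $\I$-convergence of $(z_{s(n)})$ to some other point, which may depend on $s$; in the category argument this is avoided because one \emph{constructs} $s$ hitting whole blocks $[n_k,n_{k+1})$ of both clusters, and a measure-generic $s$ cannot be forced to do that.

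The transfer mechanism you invoke is also misstated: $\I$-invariance of $s$ gives $s^{-1}[V]\in\I\Leftrightarrow s[s^{-1}[V]]=V\cap s[\N]\in\I$, so to conclude $s^{-1}[V]\notin\I$ you need that the random trace $V\cap s[\N]$ of a fixed $\I$-positive set $V$ is $\I$-positive. That is a separate probabilistic fact (true for $\I_d$ via the strong law of large numbers), not a formal consequence of property (G), which concerns images $s[A]$ under a $\lambda$-random injection rather than traces on a fixed $V$; your ``Borel--Cantelli-type computation'' is exactly where this content would have to live, and it is not supplied, nor does it resolve the $s$-dependence of the putative limit. So the hard implication remains unproved as written: you would need either to actually reconstruct the combinatorial argument of \cite{BkGW}, or to give an independent one (for instance, show that on the event of $\I$-convergence the $\I$-limit of $(z_{s(n)})$ is $\lambda$-almost surely constant, by invariance under finite permutations of the coin coordinates, and then splice a generic $s$ with the enumeration of its complement, both $\I$-invariant, to deduce $\I$-convergence of the whole sequence).
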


The following theorem is a main result of this section.

\begin{theorem} \label{TW3}
Let $(X, \M,\mu)$ be a $\sigma$-finite complete measure space and
let $(f_n)$ be a sequence of $\M$-measurable functions from $X$ to a complete metric space $(Z,\rho)$.
Assume that $\I$ is an ideal on $\N$ which is either analytic or coanalytic and has property (G).
Then the following conditions are equivalent:
\begin{itemize}
\item[(i)] the set 
$\{x \in X \colon (f_n(x)) \mbox{ is }\I\mbox{-divergent}\}$ 
is of full measure $\mu$ in $X$;
\item[(ii)] the set $\{x \in X \colon\lambda(\{s \in S \colon (f_{s(n)}(x)) \mbox{ is } \I\mbox{-convergent}\})=0 \}$ is of full measure $\mu$ in $X$;
\item[(iii)] $(\mu\times\lambda)(\{(x,s) \in X \times S \colon (f_{s(n)}(x)) \mbox{ is } \I\mbox{-convergent}\})=0$;
\item[(iv)] $\lambda(\{s \in S \colon \mu(\{x \in X \colon (f_{s(n)}(x)) \mbox{ is } \I\mbox{-convergent}\})=0 \})=1$. 
\end{itemize}
\end{theorem}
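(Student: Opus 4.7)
The plan is to replicate the proof of Theorem~\ref{TW2} with Fubini's theorem replacing Kuratowski--Ulam and Lemma~\ref{LK3} replacing Theorem~\ref{TW}; Lemma~\ref{LK2} provides the measurability needed to invoke Fubini.

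First I would fix notation by setting
$$B:=\{(x,s)\in X\times S\colon (f_{s(n)}(x))\text{ is }\I\text{-convergent}\},$$
and denoting its $x$- and $s$-sections by $B_x$ and $B^s$. By Lemma~\ref{LK2}, $B\in\M\otimes\A$; since $\mu$ is $\sigma$-finite and $\lambda$ is a probability measure, Fubini's theorem yields
$$(\mu\times\lambda)(B)=\int_X \lambda(B_x)\,d\mu(x)=\int_S \mu(B^s)\,d\lambda(s).$$
Each of (ii), (iii), (iv) is therefore equivalent to the single condition $(\mu\times\lambda)(B)=0$, which gives (ii)$\Leftrightarrow$(iii)$\Leftrightarrow$(iv) immediately.

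For (i)$\Leftrightarrow$(ii), I would apply Lemma~\ref{LK3} to the sequence $(f_n(x))$ slice by slice in $x$: for each $x\in X$, $(f_n(x))$ is $\I$-convergent iff $\lambda(B_x)=1$. The direction (ii)$\Rightarrow$(i) is then immediate, since the set of $x$ for which $(f_n(x))$ is $\I$-convergent is contained in $\{x\colon\lambda(B_x)>0\}$, a $\mu$-null set by (ii). The direction (i)$\Rightarrow$(ii) is the main obstacle: Lemma~\ref{LK3} only yields $\lambda(B_x)\neq 1$ when $(f_n(x))$ is $\I$-divergent, whereas (ii) requires $\lambda(B_x)=0$. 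I would close this gap by invoking the 0-1 alternative $\lambda(B_x)\in\{0,1\}$, which, for ideals with property~(G), follows from a Kolmogorov-type tail argument: via the homeomorphism $h\colon S\to T\subset\{0,1\}^\N$, the event $B_x$ corresponds (up to a $\lambda$-null set) to a tail event in the Bernoulli product measure, because property~(G) makes $\lambda$-almost every $s$ $\I$-invariant, so the $\I$-convergence of $(f_{s(n)}(x))$ is insensitive to finite alterations of $s$. Combined with Lemma~\ref{LK3}, this dichotomy gives $(f_n(x))$ $\I$-divergent iff $\lambda(B_x)=0$, identifying (i) and (ii) pointwise and completing the chain of equivalences.
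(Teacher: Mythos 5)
Your proposal matches the paper's proof essentially step for step: the equivalences (ii)$\Leftrightarrow$(iii)$\Leftrightarrow$(iv) come from Lemma \ref{LK2} and the Fubini theorem exactly as you describe, and (i)$\Leftrightarrow$(ii) comes from Lemma \ref{LK3} combined with the Kolmogorov zero--one law applied to $h[E(x)]$ regarded as a tail set in $\{0,1\}^\N$. The only (cosmetic) difference is that the paper simply asserts $h[E(x)]$ is a tail set and cites the 0--1 law from Oxtoby, while you flag that property (G) is what underwrites the tail/zero--one dichotomy --- a fair reading of where the subtlety actually lies, since for ideals without property (G) the set $h[E(x)]$ need not be a tail set.
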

\begin{proof} 
Denote the sets in the statements (i) and (ii) by $A$ and $H$, respectively.
To prove (i)$\Rightarrow$(ii) assume that $A$ is of full measure $\mu$. Let $x\in A$. By Lemma \ref{LK3}, the set 
$$E(x):=\left\{ s\in S\colon (f_{s(n)}(x))\mbox{ is }\I\mbox{-convergent}\right\}$$ 
satisfies $\lambda(E(x))<1$. The image $h[E(x)]$ under the
homeomorphism $h\colon S\to T$ (see Section 1) is a tail set in $\{0,1\}^\N$, so by the 0-1 law \cite[Theorem 21.3]{Ox} we have $\lambda(E(x))=0$.
Hence $x\in H$. Consequently, $H$ is of full measure $\mu$.

To show (ii)$\Rightarrow$(i) assume that $H$ is of full measure $\mu$. Let $x\in X\setminus A$. Then 
$(f_n(x))$ is $\I$-convergent, so by Lemma \ref{LK3} we have $\lambda(E(x))=1$ where $E(x)$ is as above.
Hence $x\in X\setminus H$. So $A$ is of full measure $\mu$.

Let $B$ stand for the set in the statement (iii). Thanks to Lemma \ref{LK2}, we have $B\in\M\otimes\A$. Thus, the equivalences
(ii)$\Leftrightarrow$(iii)$\Leftrightarrow$(iv) follow from
 the Fubini theorem and its converse.
\end{proof}

We have the following analogue of Corollary \ref{CC1}. 

\begin{corollary} \label{CK1}
Let $(X, \M,\mu)$ be a $\sigma$-finite complete measure space and
let $(f_n)$ be a sequence of $\M$-measurable functions from $X$ to a complete metric space $(Z,\rho)$.
Assume that $\I$ is an ideal on $\N$ which is either analytic or coanalytic and has property (G).
If the sequence $(f_n)$ is $\I$-divergent $\mu$-almost everywhere, then  
$\lambda(\{s \in S \colon \mu(\{x \in X \colon (f_{s(n)}(x)) \mbox{ is } \I\mbox{-convergent}\})=0 \})=1$.
\end{corollary}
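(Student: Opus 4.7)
The plan is to derive the corollary as an immediate consequence of Theorem \ref{TW3}: the statement is exactly the implication (i)$\Rightarrow$(iv) of that theorem. All hypotheses match—$(X,\M,\mu)$ is complete and $\sigma$-finite, the $f_n$ are $\M$-measurable into the complete metric space $(Z,\rho)$, and $\I$ is analytic or coanalytic with property (G). The assumption that $(f_n)$ is $\I$-divergent $\mu$-almost everywhere translates to condition (i), while the desired conclusion is condition (iv). So the proof would be a one-line invocation of Theorem \ref{TW3}.

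If instead one wanted to present the argument without that forward reference, I would organize it in two stages mirroring the internal structure of the proof of Theorem \ref{TW3}. Stage one is pointwise: for each $x$ in the full-measure set on which $(f_n(x))$ is $\I$-divergent, I would apply Lemma \ref{LK3} to get $\lambda(E(x))<1$, where $E(x):=\{s\in S:(f_{s(n)}(x))\text{ is }\I\text{-convergent}\}$. Since $\I\supset\on{Fin}$, altering $s$ at finitely many coordinates does not affect $\I$-convergence of the resulting subsequence, so $h[E(x)]\subset\{0,1\}^\N$ is a tail set; the Kolmogorov $0$-$1$ law then upgrades the strict inequality to $\lambda(E(x))=0$ for $\mu$-almost every $x$.

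Stage two is a Fubini argument. By Lemma \ref{LK2}, the set $B:=\{(x,s)\in X\times S:(f_{s(n)}(x))\text{ is }\I\text{-convergent}\}$ lies in $\M\otimes\A$. Since $\mu$ is $\sigma$-finite and $\lambda$ is a probability measure, Fubini applies; integrating out $s$ first gives $(\mu\times\lambda)(B)=0$, and integrating out $x$ then yields $\mu(B^s)=0$ for $\lambda$-almost every $s$—the desired conclusion. The one point requiring real thought—and the reason property (G) and Lemma \ref{LK3} must enter—is the tail-set recognition needed to invoke the $0$-$1$ law; everything else is a mechanical application of Fubini to a measurable set.
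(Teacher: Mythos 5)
Your proposal is correct and matches the paper exactly: the corollary is stated there as the immediate consequence of the implication (i)$\Rightarrow$(iv) of Theorem \ref{TW3}, and your unrolled two-stage argument (Lemma \ref{LK3} plus the tail-set $0$-$1$ law pointwise, then Lemma \ref{LK2} and Fubini) reproduces the paper's own proof of that theorem.
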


Observe an asymmetry between the category and the measure cases.
Namely, the divergence of $(f_n)$ on a large set is stated in condition (i) of Theorem \ref{TW2},
while, in the respective condition of Theorem \ref{TW3}, we have the $\I$-divergence of $(f_n)$ on a large set.
The remaining conditions (ii)--(iv) are completely analogous.

\end{document}